\definecolor{linkBlue}{rgb}{.1,.1, .6}
\definecolor{linkGreen}{rgb}{.1,.35, .1}
\renewcommand\Re{\operatorname{\mathfrak{Re}}}
\newcommand{\half}{\frac{1}{2}}
\newcommand{\eqb}[1]{\begin{equation}\label{#1}}
\newcommand{\eqe}{\end{equation}}
\newcommand{\eqn}[2]{\begin{equation}\label{#1}#2\end{equation}}
\newcommand{\aln}[1]{\begin{align}#1\end{align}}
\newcommand{\splt}[1]{\begin{split}#1\end{split}}
\newcommand{\css}[1]{\begin{cases}#1\end{cases}}
\newcommand{\ra}{\rangle}
\newcommand{\la}{\langle}
\newcommand{\st}{\hbox{ \,\,subject to\,\, }}
\newcommand{\matb}{\left( \begin{matrix*}[r] }
\newcommand{\mate}{\end{matrix*}\right)}
\newcommand{\kp}{^{k+1}}
\newcommand{\km}{^{k-1}}
\newcommand{\opt}{^\star}
\newcommand{\reals}{\mathbb{R}}
\newcommand{\comps}{\mathbb{C}}
\newcommand{\ellone}{$\ell_1$}
\newcommand{\Chi}{\mathcal{X}}
\newcommand{\mcA}{\mathcal{A}}
\DeclareMathOperator*{\mmv}{MMV}
\DeclareMathOperator*{\minimize}{minimize\quad}
\DeclareMathOperator*{\sign}{sign}
\DeclareMathOperator*{\argmax}{arg\,max}
\DeclareMathOperator*{\argmin}{arg\,min}
\DeclareMathOperator*{\shrink}{shrink}
\DeclareMathOperator{\prox}{prox}
\DeclareMathOperator{\trace}{trace}
\newtheorem{theorem}{Theorem}
\theoremstyle{definition}
\begin{document}

\sloppy 

\bibliographystyle{ieeetr} 

\title[A Field Guide to Forward-Backward Splitting]{    A Field Guide to Forward-Backward Splitting  \\ with a FASTA implementation }
\author{Tom Goldstein, Christoph Studer, Richard Baraniuk}
\date{\today}

\maketitle

\begin{abstract}\vspace{-1.0cm}
Non-differentiable and constrained optimization play a key role in machine learning, signal and image processing, communications, and beyond.
For high-dimensional minimization problems involving large datasets or many unknowns, the forward-backward splitting method (also known as the proximal gradient method) provides a simple, yet practical solver.
Despite its apparent simplicity, the performance of the forward-backward splitting method is highly sensitive to implementation details. 

This article provides an introductory review of forward-backward splitting with a special emphasis on practical implementation aspects.  In particular, issues like stepsize selection, acceleration, stopping conditions, and initialization are considered. Numerical experiments are used to compare the effectiveness of different approaches.  

 Many variations of forward-backward splitting are implemented in a new solver called FASTA (short for Fast Adaptive Shrinkage/Thresholding Algorithm).  FASTA provides a simple interface for applying forward-backward splitting to a broad range of problems appearing in sparse recovery, logistic regression, multiple measurement vector (MMV) problems, democratic representations, 1-bit matrix completion, total-variation (TV) denoising, phase retrieval, as well as non-negative matrix factorization.

\end{abstract}

\tableofcontents

\section{Introduction}

A large number of non-differentiable and constrained convex optimization problems have the following form:
\eqb{general}
\minimize h(x) = f(x)+g(x),
\eqe 
where $x\in\reals^N ,$  $f$ is convex and differentiable, and $g$ is an arbitrary (i.e., not necessarily smooth) convex function.  Problems of the form \eqref{general} arise frequently in the many areas including  machine learning, dictionary learning \cite{JMOB10}, online learning for streaming data \cite{DS09}, sparse coding \cite{KL10,SGL12}, spectral clustering \cite{HS11}, sparse covariance estimation \cite{OONR12}, approximate nearest neighbor search and vector quantization~\cite{JFF11}, as well as compressive sensing \cite{FNW07}.
  
In many situations, the function $g$ is neither differentiable nor even finite-valued, in which case the problem \eqref{general} cannot be minimized using simple gradient-descent methods. However, for a large class of functions $g$ that arise in practice, one can efficiently compute the so-called \emph{proximal} operator 
\eqb{prox}
\prox_g(z,\tau) = \argmin_x \,\tau g(x)+\half \|x-z\|^2.
\eqe
The proximal operator finds a point close to the minimizer of $g$ without straying too far from a starting point $z$, and is often referred to as a \emph{backward} (or \emph{implicit}) gradient-descent step with stepsize $\tau.$
If the proximal operator~\eqref{prox} can be evaluated easily, then one can  solve \eqref{general} efficiently using the \emph{Forward-Backward Splitting} (FBS) method (also known as the proximal gradient method). Put simply, FBS can handle non-differentiable objectives and convex constraints while maintaining the simplicity of gradient-descent methods.

 Due to the vast applications of FBS and its utility for sparse coding and regression, many variants of FBS have been developed to improve performance and ease of use.  In its raw form, FBS requires the user to choose a number of convergence parameters that strongly effect both the performance and reliability of the algorithm.  These include stepsizes, stopping condition parameters, acceleration schemes, stability conditions, and initialization.   With the right modifications, FBS can be performed without substantial oversight from the user.  
 
 This article introduces  and reviews FBS from a practical point of view.  While this is not the first review article written on FBS, it differs from existing review articles in that it focuses on practical implementation issues.  For and excellent theoretical review of FBS, we refer to~\cite{CP11}.
 
 \subsection{Outline}
 
  In Section \ref{sec:fbs}, we introduce the forward-backward splitting method and discuss its convergence behavior.  Numerous example problems are discussed in Section \ref{sec:apps}, and for each we detail the formulation and solution via FBS.  In Section \ref{sec:baw}, we discuss practical issues related to the implementation of FBS.  The performance of variants of FBS on different test problems is explored in Section \ref{sec:num}.
 Practical issues of FBS are discussed in Section \ref{sec:baw} and are incorporated into a new reference implementation of FBS, called FASTA (short for Fast Adaptive Shrinkage/Thresholding Algorithm).  FASTA provides a simple interface for applying forward-backward splitting to a broad range of optimization problems.

 \subsection{A Word on Notation}
 The  $\ell_2$ norm is denoted by
 $\|x\| = \sqrt{\sum_i |x_i|^2}.$
We use the ``single-bar'' notation to define the $\ell_1$ norm $|x| = \sum_i |x_i|.$
 The transpose of a matrix/vector $x$ is denoted $x^T.$  The inner-product of two vectors $x$ and $y$ is denoted $\la x,y\ra = x^Ty.$  The inner product of two matrices $X$ and $Y$ is given by $\la X,Y\ra= \trace(X^TY) = \sum_{i,j} X_{i,j}Y_{i,j}.$ We use the symbol $\Re$ to denote the real part of a (possibly) complex quantity. For example $\Re x$ denotes the real part of $x.$  
The vector $x\opt$ that minimizes some objective function $h$ is denoted $x\opt = \argmin_x h(x).$   In contrast, the minimal {\em value} of $f$ is denoted $\min_x f(x).$

\section{Forward-Backward Splitting} \label{sec:fbs}
Forward-Backward Splitting is a two-stage method that addresses each term in \eqref{general} separately.  The FBS method is listed in Algorithm \ref{alg:fbs}.

\begin{algorithm}[H]
\begin{algorithmic}
 \While{not converged}
 \begin{flalign}
    \qquad \hat x^{k+1} &= x^k-\tau^k \nabla f(x^k) \label{fs}&\\
  \label{bs}   x^{k+1} &= \prox_g(\hat x^{k+1},\tau^k) = \argmin_x\,\, \tau^kg(x)+\frac{1}{2} \| x - \hat x^{k+1} \|^2&
  \end{flalign}
 \EndWhile
\end{algorithmic}
\caption{Forward-Backward Splitting}
\label{alg:fbs}
\end{algorithm}
Let's examine each step of the algorithm in detail.  Line \eqref{fs} performs a simple {\em forward} gradient descent step on $f$.  This step begins at iterate $x^k,$ and then moves in the direction of the (negative) gradient of $f,$ which is the direction of steepest descent.  The scalar $\tau^k$ is the {\em stepsize} which controls how far the iterate moves along the gradient direction during iteration $k.$

Equation \eqref{bs} is called the proximal step, or {\em backward} gradient descent step.   To understand this terminology, we examine the proximal operator \eqref{prox}. Any $x\opt$  that minimizes \eqref{prox} must satisfy the optimality condition
   \eqn{opt_g}{
   0=  \tau G+( x\opt - z)
   }
  where  $G\in \partial g(x\opt)$ is some sub-gradient (generalized derivative) of $g.$ Note that when $g$ is differentiable we simply have $G = \nabla g(x\opt).$ Equation \eqref{opt_g} rearranges to 
   $$x\opt = \prox_g(z,\tau) = z-\tau G.$$
    This shows that $x\opt$ is obtained from $z$ by marching down the sub-gradient of $g.$ For this reason, the proximal operator performs a gradient descent step.  Because the sub-gradient $G$ is evaluated at the final point $x\opt$ rather than the starting point $z,$ this is called {\em backward} gradient descent.  
    
    Equation \eqref{opt_g} is equivalent to the set inclusion
    $0 \in \tau \partial  g(x\opt)+( x\opt - z),$
    which rearranges to 
    $$ z \in \tau  \partial  g(x\opt)+ x\opt = (\tau  \partial  g+ I) x\opt.$$
    For this reason, the proximal operator \eqref{prox} is sometimes written 
    $$x\opt = (\tau  \partial  g+ I)^{-1} z = J_{\tau\partial g} z$$
    where $J_{\tau\partial g}=(\tau^k  \partial  g+ I)^{-1}$ is the {\em resolvent operator} of $\tau  \partial  g.$  The resolvent is simply another way to express the proximal operator. In plain terms, the proximal/resolvent operator, when applied to $z,$  performs a backward gradient descent step starting at $z$.
    
    Algorithm \ref{alg:fbs} alternates between forward gradient descent on $f$, and backward gradient descent on $g$.
    The use of a backward step for $g$ is advantageous in several ways. First, it may be difficult to choose a sub(gradient) of $g$ in cases where the sub-gradient $\partial g$ has a complex form or is not unique.  In contrast, it can be shown that problem \eqref{bs} always has a unique well-defined solution \cite{CW05}, and (as we will see later) it is often possible to solve this problem in simple closed form. Second, the backward step has an important effect on the convergence of FBS, which is discussed in the next section.
    
    \subsection{Convergence of FBS}
    The use of backward (as opposed to forward) descent for the second step of FBS is needed to guarantee convergence.  To see this,  let $x\opt$  denote a fixed point of the FBS iteration.  Such a point satisfies
       $$x\opt = prox_g\left(x\opt - \tau^k \nabla f(x\opt),\tau^k\right)= x\opt - \tau^k \nabla f(x\opt) - \tau^k G(x\opt)$$
    for some $G(x\opt) \in \partial g(x\opt).$  This simplifies to 
    $$0  =\nabla f(x\opt)+G(x\opt),$$
    which is the optimality condition for \eqref{general}.  This simple argument shows that a vector is a fixed-point of the FBS iteration if and only if it is optimal for \eqref{general}.  This equation has a simple interpretation:  when FBS is applied to an optimal point $x\opt,$ the point is moved to a new location by the forward descent step, and the backward descent step puts it back where it started.  Both the forward and backward step agree with one another because they both rely on the gradients evaluated at $x\opt.$ 
    
    This simple fixed point property is not enough to guarantee convergence.  FBS is only convergent when the stepsize sequence $\{\tau^k\}$ satisfies certain stability bounds.  An important property of FBS is that this stability condition does not depend on $g,$ but rather on the curvature of $f.$      
     In the case of a constant stepsize $\tau^k=\tau,$ FBS is known to converge for 
    \eqn{restrict}{\tau < \frac{2}{L(\nabla f)}}
    where $L(\nabla f)$ is a Lipschitz constant of $\nabla f$  (i.e., $\|\nabla(x)-\nabla(y)\|<L\|x-y\|$ for all $x,\,y$).   In many applications $f=\half \|Ax-b\|^2$ for some matrix $A$ and vector $b.$  In this case, $L(\nabla f)$ is simply the spectral radius of $A^TA.$
    
        For non-constant stepsizes, it is known that convergence is guaranteed if the stepsizes satisfy $0<l<\tau^k<u<2/L(\nabla f)$ for some upper bound $u$ and  lower bound $l$  (see theorem 3.4 in \cite{CW05} for this result and its generalization).
    
    In practice, one seldom has accurate knowledge of $L(\nabla f),$ and the best stepsize choice depends on both the problem being solved and the error at each iteration.  For this reason, it is better in practice to choose the sequence $\{\tau^k\}$ adaptively and enforce convergence using backtracking rules rather than the explicit stepsize restriction \eqref{restrict}.  These issues will be discussed in depth in Section \ref{sec:linesearch}.



\section{Applications of Forward-Backward Splitting} \label{sec:apps}
In this section we study a variety of problems, and discuss how they are formulated and solved using forward-backward splitting.
FBS  finds use in a large number of fields, including machine learning, signal and image processing, statistics, and communication systems.  Here, we briefly discuss a small subset of potential applications.  In Section~\ref{sec:num} we present numerical experiments using these test problems.

\newpage
\subsection{Simple Applications}
\subsubsection{Convex Constraints and the Projected Gradient Method}  \label{sec:projGrad}
The projected gradient (PG) method is a special case of FBS involving a convex constraint.
Suppose we are interested in solving 
 \eqn{cons}{\minimize f(x) \,\, \st x\in \mathcal{C}}
 for some convex set $\mathcal{C}$.  We can rewrite this problem in the form \eqref{general} using the (non-smooth) characteristic function $\Chi_\mathcal{C}(x)$ of the set~$\mathcal{C}$, which is zero for $x\in \mathcal{C}$ and infinity otherwise.  As a consequence, the  problem
   $$\minimize f(x) +\Chi_\mathcal{C}(x)$$
is then equivalent to \eqref{general} with $g(x) = \Chi_\mathcal{C}(x)$.  To apply FBS, we must evaluate the proximal operator of $\Chi_\mathcal{C}$:  
 \eqn{proxCon}{
 \prox_{\Chi_\mathcal{C}}(z,\tau) = \argmin_{x\in \mathcal{C}}  \half \|x-z\|^2.
 } 
The solution to \eqref{proxCon} is the element of $\mathcal{C}$ closest to $z$; this is simply the orthogonal projection of~$z$ onto the set $\mathcal{C}$. 
The resulting PG method was originally studied by Goldstein, Levitin, and Polyak \cite{Bertsekas76,Goldstein64,LP66}.

\subsubsection{Lasso Regression}
One of the earliest sparse regression tools from high-dimensional statistics is the Lasso regression, which is easily written in the form \eqref{cons}.

An important application of PG methods as in \eqref{cons} is the Lasso regression problem \cite{Tibshirani94}, which is an important sparse regression tool.  The Lasso regression is defined as follows:
\eqn{lasso}{
 \minimize   \half \| Ax-b \|^2
 \st \|x\|_1\le \lambda.
}
Here, the convex set $\mathcal{C} = \{x: \|x\|_1\le \lambda \}$ is simply an \ellone-norm ball.  
The corresponding  proximal step, i.e., projection onto the \ellone{}-norm ball, can be carried out efficiently using linear-time algorithms, such as the method proposed in \cite{DSSC08}. We note that FBS has been used previously for the Lasso regression problem~\eqref{lasso} in the  popular SPGL1 solver \cite{VF07,VF08}.

\subsubsection{{$\boldsymbol \ell_1$}-Norm Penalized (Logistic) Regression} \label{sec:l1ls}

One of the most common applications for FBS is the following \ellone{}-norm penalized least squares problem:
\eqn{bpdn}{
 \minimize  \mu \|x\|_1 + \half \| Ax-b \|^2.
}
In statistics, problem \eqref{bpdn} is used to find sparse solutions to under-determined least squares problems. 
Problem \eqref{bpdn}  is called basis pursuit denoising (BDPN) in the context of compressive sensing and sparse signal recovery  \cite{FNW07,KL10}.

Suppose the vector~$b\in\{0,1\}^M$ contains binary entries representing the outcomes of random Bernoulli trials.  The success probability of the $i$th entry is 
 $P(b_i=1\, |\,x)=e^{A_i x}/(1+e^{A_ix} )$ where $A_i$ denotes the $i$th row of $A.$
 One is then interested in solving the so-called sparse \emph{logistic} regression problem
\eqn{logistic}{
 \minimize  \mu \|x\|_1 + \mathrm{logit}(Ax,b)
}
with the logit penalty function defined as 
\eqn{loglink}{
\mathrm{logit}(z,b) =  \sum_{i=1}^{M} \log(e^{z_i}+1)  - b_iz_i. \notag
}

Both problems, \eqref{bpdn} and \eqref{logistic}, can be solved using FBS with $g(x) = \|x\|_1.$  The proximal operator  of $g$ is given by the well-known shrinkage operator $\shrink(z,\mu\tau),$ whose $i^\text{th}$ element  is obtained as  
 \eqn{shrink}  {
  \shrink(z,\mu\tau)_i = \shrink(z_i,\mu\tau)=   \sign(z_i) \max\{|z_i|-\mu\tau ,0\}.
  }

 \subsubsection{Multiple Measurement Vector (MMV)}
  In Section \eqref{sec:l1ls} we sought sparse vectors satisfying $Ax\approx b$ for some matrix $A$ and measurement vector $b.$  Suppose now that we have a matrix $B$ containing many measurement vectors, and we want to find a sparse matrix $X$ satisfying $AX\approx B.$   If we further suppose that all columns of $X$ must have the same sparsity pattern, then we arrive at the Multiple Measurement Vector (MMV) problem \cite{CREK05}.  To formulate MMV using convex optimization, we  need the group sparsity prior 
    $$\mmv(X) = \sum_i \sqrt{ \sum_j X_{ij}^2 } = \sum_i \|X_i\|$$
  where $X_i$ denotes the $i$th row of $X.$
  
  The convex formulation of MMV is
  $$\minimize_X  \mu \mmv(X)+\half \|AX-B\|^2$$
  where the scalar $\mu$ controls the strength of the penalty.
  This is easily solved using FBS.  The forward step is simply 
   $$\widehat X\kp = X^k- \tau^kA^T(AX^k-B).$$
  The backward step requires the proximal operator of $\mu \mmv(\cdot),$ which can be evaluated row-by-row.  The $i$th row is simply
    $$X\kp_i = \prox_{\mu \mmv}(\widehat X\kp, \tau^k )_i =  \widehat X\kp_i \frac{\max\{\|\widehat X\kp_i\|-\tau^k\mu,0\} }{\|\widehat X\kp_i\|}.$$

\subsubsection{Democratic Representations}

The dynamic range of signals plays an important role in approximate nearest neighbor search and vector quantization~\cite{JFF11}, as well as communications and robotics/control \cite{SGB14}.
Given a signal $b\in \reals^M$, a low-dynamic range representation can be found by choosing a suitable matrix $A\in \reals^{M\times N}$ with $M<N$, and by solving 
   \eqn{dem}{
     \minimize \mu \|x\|_\infty +  \half \| Ax-b \|^2.
   }
The problem \eqref{dem} often yields a so-called democratic representation $x\opt$ that has small dynamic range and  for which a large number of entries have equal (and small) magnitude \cite{SGB14}. The problem \eqref{dem} can, once again,  be solved using FBS.  The only missing ingredient is the proximal operator for the $\ell_\infty$-norm, which can be computed in linear time using the method in \cite{DSSC08}.

\subsubsection{Low-Rank (1-bit) Matrix Completion}

The goal of matrix completion is to recover a low-rank matrix $\hat X$ from a small number of linear measurements.  Such problems often involve the {\em nuclear norm} of a matrix, which is simply the sum of the magnitudes (i.e., the $\ell_1$ norm) of the eigenvalues.

  One prominent formulation of matrix completion takes the form
\eqn{matcomp}{\minimize \mu \| X \|_* + L(X,Y), }
where $ \| X \|_*$ is the low-rank inducing nuclear norm of the matrix $X$ and $L(\cdot,Y)$ is a loss function that depends on the model of the observed data $Y$ \cite{KMS09}.  
 
Suppose that $Y$ contains noisy and punctured (or missing) data of the entries of $\hat X$, and let $\Omega$ denote the set of indices that have been observed. In this case,  one can use 
  $$L(X,Y) = \sum_{\alpha\in\Omega} (X_\alpha - Y_\alpha)^2$$
as an appropriate loss function.
Another loss function arises in 1-bit matrix completion, which can be viewed as an instance of low-rank logistic regression  \cite{DPVW13}. Each observation $Y_\alpha$ is assumed to be a Bernoulli random variable with success probability 
 $P(Y_\alpha=1| \hat X_\alpha)=e^{\hat X_\alpha}/(1+e^{\hat X_\alpha}).$
For this case, the appropriate loss function is again the logit function: $$ L(X,Y)  = \mathrm{logit}(X,Y).$$  
 
For both quadratic and logit link functions, \eqref{matcomp} can be solved using FBS. Specifically, one needs to compute the proximal operator of the nuclear norm, defined as
\begin{align} \label{eq:nucnormproxy}
 \prox_*(Z,\tau) = \argmin_X \tau  \mu\|X\|_* + \half \|X-Z\|^2,
\end{align}
whose solution is given by the matrix $U \shrink(S,\mu\tau)V^T$, where  $Z=USV^T$ is the singular value decomposition (SVD) of $Z$ \cite{DPVW13}.

\subsection{More Complex Applications}    
Sometimes an objective function does not immediately decompose into a smooth part and a ``simple'' part  for which we can evaluate the proximal operator in closed form.  In this case, it is often possible to re-formulate the problem in such a way that  FBS is easily applied.  In this section, we look at two such problems (total-variation denoising and support vector machines) that can be reformulated using {\em duality} and then solved easily and efficiently  using FBS.  We also look at an example where the {\em convex relaxation} of a problem is easily solved using FBS.

\subsubsection{Total-Variation Denoising} \label{sec:tv}
Given a 2-dimensional image $u,$ the intensity of the pixel in the $i$th row and $j$th column is denoted $u_{ij}.$  The discrete gradient of $u$ is denoted $\nabla u.$  At each point in the image, the gradient is given by $(\nabla u)_{ij} = (u_{i+1,j}-u_{i,j},u_{i,j+1}-u_{i,j})^T$ and contains the discrete derivatives in the row and column directions.

 Given a noise-contaminated image $f,$ one can construct a denoised image by solving
\eqn{tv}{
\minimize_u \mu |\nabla u| + \half \|u-f\|^2 
}
where $\mu$ is a parameter to control the level of smoothing, and 
\eqn{tvdef}{
|\nabla u| =\sum_{i,j} \|(\nabla u)_{ij}\| =\sum_{i,j} \sqrt{ (u_{i+1,j}-u_{i,j})^2+(u_{i,j+1}-u_{i,j})^2}
} 
denotes the total-variation of $u$ \cite{ROF92}.  Rather than imposing sparsity on $u$ itself, the total variation regularizer imposes sparsity on the {\em gradient} of $u.$  As a result, problem \eqref{tv} finds a piecewise-constant approximation to $f.$

   To perform TV minimization using FBS, we must re-formulate $|\nabla u|$ into a simpler differentiable function.   We begin by letting $x_{ij}=(x_{ij}^r,x_{ij}^c)^T$ be a vector in $\reals^2.$  We can now write
    \eqn{tvmax}{
    \max_{\|x_{ij}\|\le1}  \la x_{ij}, (\nabla u)_{ij}\ra  =  \|  (\nabla u)_{ij}\|.
    }
    Equation \eqref{tvmax} follows from the Cauchy-Swartz inequality, which states that  $\la x_{ij}, (\nabla u)_{ij}\ra  \le \| x_{ij}\| \|  (\nabla u)_{ij}\|.$  Equality is attained by choosing $x_{ij}$ parallel to $(\nabla u)_{ij}$.  If we further choose $x_{ij}$ to have unit norm, then we arrive at \eqref{tvmax}.  If we apply \eqref{tvmax} to \eqref{tvdef}  we get ${|\nabla u| =  \max_{\|x\|_\infty \le1}  \la x, \nabla u\ra.}$ We now have
    \aln{
\min_u \mu |\nabla u| +\half \|u-f\|^2  &=    \min_u \max_{\|x\|_\infty \le1} \mu \la x, \nabla u\ra+ \half\|u-f\|^2 \nonumber \\
&=  \max_{\|x\|_\infty\le1} \min_u \mu\la x, \nabla u\ra+ \half \|u-f\|^2.  \label{tveq}
}
The inner minimization in \eqref{tveq} is now differentiable.  For a given $x,$ the minimal value of $u$ satisfies $u=f+\mu \nabla \cdot x,$ where $\nabla \cdot x$ is the discrete divergence (which is the negative adjoint of the gradient operator).  At pixel $ij,$ this operator takes on the scalar value ${(\nabla \cdot x)_{ij} = x^r_{i,j}-x^r_{i-1,j}+ x^c_{i,j}-x^c_{i,j-1}.}$  If we plug the optimal value of $u$ into \eqref{tveq} and simplify, we see that the optimal value of $x$ is given by:    
    \eqn{tvdual}{ 
    x^* = \argmax_{\|x\|_\infty\le1} -  \half \| \nabla \cdot x+\frac{1}{\mu}f\|^2= \argmin_{\|x\|_\infty\le1}  \half \| \nabla \cdot x+\frac{1}{\mu}f\|^2.
    }
This is simply a quadratic minimization with an infinity-norm constraint.  Problem \eqref{tvdual} is known as the {\em dual} form of \eqref{tv}.  This problem can be solved using FBS as in Section \eqref{sec:projGrad}.  The resulting algorithm alternately performs gradient descent steps on \eqref{tvdual}, and then re-projects the result back into the infinity-norm ball using the formula ${x_{ij} \leftarrow x_{ij}/\max\{\|x_{ij}\|,1\}.}$  Once problem \eqref{tvdual} has been solved, the optimal (denoised) image $u\opt=f+\mu \nabla \cdot x\opt$ is computed.  

This approach to total-variation minimization was first taken in \cite{Chambolle04} using constant stepsize parameters.  A similar approach was taken using accelerated variants of FBS in \cite{BT09:tv}.

\subsubsection{Support Vector Machines}  \label{sec:svm}
Consider a set of data points $\{d_i\}$ each of which lies in $\reals^n$ and has a binary label $l_i\in\{-1,1\}.$  The support vector machine (SVM) aims to find the hyper-plane in $\reals^n$ that separates the points with label $+1$ from the points with label $-1.$  This is done by solving 
  \eqn{svm}{
   \minimize_{w} \half\|w\|^2 + C\cdot h_l(Dw)
   }  
where $w\in \reals^n,$ $C$ is a constant regularization parameter, $D$ is a matrix with $i$th row equal to $d_i$, and $h$ is the {\em hinge loss} function 
 $$h_l(z) =\sum_i  \max\{1-l_i z_i,0\}.$$
 
 The hinge loss function penalizes points that either lie on the ``wrong'' side of the hyper-plane $w^Tx=0$ or that lie too close to this hyperplane.
  
  To apply FBS, we ``dualize'' this problem much like we did for total variation (Section \ref{sec:tv}).  We first note that for any $z\in \reals,$ we have $C\max\{z,0\} = \max_{0\le x\le C} xz.$
   Using this observation, we re-write \eqref{svm} as
  \eqn{svmpd}{\min_{w} \max_{0\le x\le C}   \half \|w\|^2 + x^T(1_n- LDw))=\max_{0\le x\le C}   \min_{w}  \half \|w\|^2 + x^T(1_n- LDw))}
  where the constraints on the vector $x$ are interpreted element-wise, $L$ is a diagonal matrix with $L_{i,i} = l_i,$ and $1_n$ is a vector of 1's.  This objective is quadratic in $w$, and the optimal value of $w$ is given by $D^TLx.$  If we plug this value in for $w$ in \eqref{svmpd} and multiply by -1 to convert the maximization for $x$ into a minimization, we arrive at the dual problem
     \eqn{svmdual}{\minimize_{0\le x\le C} \half \|D^TLx\|^2- \sum_i x_i.}
     This is a simple quadratic minimization with constraints of the form $0\le x_i \le C.$ The proximal operator corresponding to these constraints is the projection $\prox(z,t) = \min\{\max\{z,0\},C\}.$ Once the dual problem \eqref{svmdual} is solved using FBS, the primal solution is recovered from the formula $w=D^TLx.$  SVM's and the dual minimization are reviewed in~\cite{SS04}.

\subsubsection{Phase Retrieval and Rank Minimization} \label{phase}
A variety of non-convex problems can be relaxed into convex problems involving symmetric positive semi-definite (SPDP) matrices.  
This  idea was popularized by \cite{GW95}, in which it was shown that the NP-complete MaxCut problem has a convex relaxation with the form of a  semi-definite program. Here, we consider the more recent PhaseLift algorithm, which can be used for phase retrieval applications \cite{CSV13}.

Suppose we wish to recover a complex-valued signal $x\in \comps^N$ from measurements of the form $|\la a_i,x \ra|^2 = b_i$ for some set of vectors $\{a_i\}_{i=1}^M.$  In words, we take inner products of the vector $x$ with the measurement vectors $\{a_i\}$ and discard the phase information.

Recovery of $x$ from the phase-less measurements $b$ requires the solution of a system of non-linear equations.  However, one can relax the equations into a convex problem by defining $A_i = a_ia_i^T,$ and observing that $|\la a_i,x \ra|^2  = \la A_i,xx^T \ra = b_i.$  Letting $\mcA(xx^T)_i =\la A_i,xx^T \ra,$ the set of phase-less measurements  can be  written compactly as $\mcA(xx^T)=b.$   Finally, note that any SPSD rank-1 matrix $X$ can be factorized as $X=xx^T.$  This observation allows us to pose the recovery problem in the following form:
         \aln{\label{rankMin}
          \minimize \!\!\!\!\mathrm{rank}(X) \st  \!\! \mcA(X) = b, \, X\succeq 0.
          }
Even though $\mathrm{rank}(\cdot)$ is a non-convex function, one can form a  convex relaxation of \eqref{rankMin} by replacing $\mathrm{rank}(\cdot)$ with the sparsity-inducing nuclear norm.  The resulting problem, known as PhaseLift, is given by \cite{CSV13}:
      \aln{\label{rankMinrelaxed}
        \minimize\!\!\!\!\! \|X\|_*  \st  \!\! \mcA(X) = b, \, X\succeq 0.
          }
         
In the case where the measurement vector $b$ is contaminated by additive noise, we choose an  $\ell_2$-norm penalty, which stems from a Gaussian noise model. Specifically, we can write the resulting phase-retrieval problem as
        \eqn{smoothRankMin}{ 
          \minimize \!\!\! \mu\|X\|_* +\|\mcA(X) - b\|^2 \, \st  X\succeq 0, \notag
          }
which can be solved using FBS.  The relevant proximal operator corresponds to
$$\minimize \tau\mu\|X\|_* +\|X-Z\|^2,$$
whose solution is given by the matrix $U \shrink(\Lambda,\mu\tau)U^T$, where  $Z=U\Lambda U^T$ is the eigenvalue decomposition of $Z$.

\subsection{Non-convex Problems}
All applications discussed so far involved convex optimization.  In practice, FBS also works quite well when applied to {\em non-convex} problems.  However, in this case there are no theoretical guarantees that the method with converge to a global minimizer, or that the algorithm will even converge at all.   That being said, it is often the case for non-convex problems that no known method can guarantee optimality in polynomial time, and so this lack of theoretical guarantees makes FBS no weaker than other options.  

  When solving non-convex problems the user should be cautious of several things.  First, unlike in the convex case, the final result will be sensitive to the initial iterate.  For this reason, it is important to either initialize the methods with a good approximate solution, or else generate numerous solutions from different random initializations and choose the solution with the lowest objective value.    Second, because the method is no longer guaranteed to converge, the user must put an intelligent limit on the number of iterations.  In some situations, it may even be best to run the algorithm for a pre-determined number of iterations rather than using one of the precision-based termination conditions to be discussed in Section \ref{sec:stop}.

\subsubsection{Non-Negative Matrix Factorization}
Many problems in information science require a matrix to be factored into low-rank components.  Given a matrix $Q\in \reals^{M\times N},$ we seek $W\in \reals^{M\times K}$ and $C\in \reals^{N\times K}$ with 
  $$Q\approx WC^T.$$
  In {\em latent factor analysis,} the matrices $W$ and $C$ represent the underlying factors that ``explain'' the data.  Consider the performance of students on exam questions, each or which requires a blend of knowledge from $K$ different topics.  Suppose the vector $w_p\in \reals^K$ measures how much of each topic is required to solve problem $p,$ and  $c_s\in \reals^K$ describes a student's mastery of each of the $K$ topics.  If we choose these vectors appropriately, we expect student $s$ to score $Q_{ps}= \la w_p,c_s\ra$ on problem $p.$   Intuitively, it seems that a student's skill level should always be greater than or equal to zero, and each problem should require a non-negative amount of knowledge from each topic.  For this reason, is makes sense to recover the difficulty and skill vectors by solving
   \aln{\minimize_{W,C} &\|  Q-WC^T  \|^2  \label{nmf}\\
   \st & W\ge 0, C \ge 0 \nonumber
   }
   where the constraints on $W$ and $C$ are interpreted element-wise.
   
   This problem is known as {\em non-negative matrix factorization} (NMF) \cite{LS99}.  While various methods have been proposed to solve this problem \cite{LMACD14, LS00}, forward backward splitting remains one of the simplest.  The gradient of \eqref{nmf} is easily obtained using the chain rule, resulting in the forward step 
      $$
      \matb
      \widehat W\kp \\ \widehat C\kp
      \mate
      = \matb
       W^k\\ C^k
      \mate
      -   \tau^k \matb
        (W^k(C^k)^T-Q)C^k \\
          (W^k(C^k)^T-Q)^TW^k \\
      \mate.
      $$
The backward step is a trivial projection onto the set of non-negative matrices -- i.e., negative entries in the matrices are replaced by zeros.

\subsubsection{Max-Norm Optimization and the Max Cut Problem}
   Any $N\times N$ symmetric positive semidefinite matrix $A$ with rank at most $K$ can be written in the form $A=XX^T$ for some matrix $X\in \reals^{N\times K}$.  Given such a representation, the max-norm of $A$ is given by 
        $$\|A\|_{\max} = \max_i \|X_i\|^2_2 := \|X\|_{2,\infty}^2$$
    where $X_i$ is the $i$th row of $X.$  It can shown that the max-norm of a matrix is a good approximation to the nuclear norm (up to a constant factor), and can be used to promote low-rank solutions when the nuclear norm is not available because of the expense of computing singular value decompositions \cite{LRSST10}. 
	
An important application of the max-norm is for solving the max-cut problem on a graph \cite{LRSST10}. Consider a graph with $N$ vertices and an edge of weight $w_{ij}$ between vertex $i$ and $j.$  An  assignment of a binary label $x_i\in\{-1,1\}$ to each vertex is called a ``cut.''  The ``value'' of a cut is the sum of all edge weights connecting vertices with different labels.

Finding the graph cut with maximum value is NP-complete \cite{GW95}, however a good approximation can be found by solving     
        \aln{ \splt{\minimize_X   &  \la W,XX^T\ra\\  \label{maxcut}
   \st & \|X\|_{2,\infty}^2 \le 1
   }}
where $W$ is the weighted adjacency matrix of the graph, and $X\in \reals^{N\times K}$ is a matrix  \cite{LRSST10}. Once a solution to \eqref{maxcut} is found, a labeling is generated by computing $x_i = \sign(X_is)$ for some random vector $s.$  

The objective in \eqref{maxcut} differentiable, and the forward step of FBS is given by 
$$\widehat X\kp = X^k - \tau^k X(W^T+W).$$ 
The backward step is simply a projection onto the constraint set.  In this case, we have
  $$X\kp_i = \widehat X\kp_i \frac{\min\{\|\widehat X\kp_i\|,1\}}{\|\widehat X\kp_i\|}.$$


\section{Bells and Whistles} \label{sec:baw}
FBS in its raw form suffers from numerous problems including: (i) The convergence speed of FBS depends strongly on the choice of the stepsize parameters $\{\tau_k\}.$ The best stepsize choices may not be intuitively obvious.    (ii)  Convergence is only guaranteed when the stepsizes satisfy the stability condition \eqref{restrict} which depends on the Lipschitz constant for $\nabla f.$  In real applications one often has no knowledge of this Lipschitz constant and no practical way to estimate it.  (iii) To automatically stop the FBS iteration, a good measure of convergence is needed.

  In this section, we discuss practical methods for overcoming these problems.  We begin with adaptive schemes for automatic stepsize selection.  We then discuss backtracking methods that can guarantee stability even when the user has no explicit knowledge of $L(\nabla f).$  Finally, we discuss stopping conditions and measures of convergence.

\subsection{Adaptive Stepsize Selection} \label{sec:adapt}
The efficiency of FBS (and gradient methods in general) is very sensitive to the choice of the stepsize $\tau_k.$  For this reason, much work has been devoted to studying {\em adaptive} stepsize strategies. Adaptive methods automatically tune stepsize parameters in real time (as the algorithm runs) to achieve fast convergence.  In this section, we discuss spectral (also called Barzilai-Borwein) stepsize methods, and how they can be adapted to FBS.

Before considering the full-scale FBS method for \eqref{general}, we begin by considering the case $g=0.$  In this case $h=f$ and FBS reduces to simple gradient descent of the form
 \aln{x\kp = x^k-\tau^k \nabla f(x). \label{gradDescent}} 
     Spectral schemes for gradient descent were proposed by Barzilai and Borwein \cite{BB88}, who model the function $f$ as the simple quadratic function 
 \aln{ \label {bb_approx} f(x)   \approx   \hat f(x) =  \frac{a}{2} \|x\|^2+\la x , b \ra.}
 It can be shows that the optimal stepsize choice for \eqref{bb_approx} is $\tau = 1/\alpha.$  With this choice, the gradient descent method achieves a perfect minimizer of the simple quadratic \eqref{bb_approx} in one iteration.  This motivates the following stepsize scheme for gradient descent:  Before applying the descent step \eqref{gradDescent}, approximate $f$ with a quadratic of the form \eqref{bb_approx}.  Then, take a step of length $\tau^k=1/\alpha.$

Spectral stepsize rules have been generalized to handle FBS with specific choices of $g.$   When $g$ is the characteristic function of a convex set, the marriage of FBS with an adaptive stepsize produces the {\em spectral projected gradient method} (SPG) \cite{BMR00}.  SPG was further specialized to handle the case of $\ell_1$ constraints by  van den Berg and Friedlander \cite{VF07,VF08}.  The authors of \cite{WYG10} present a spectral stepsize rule that handles the case $g=|\cdot|$.  The resulting method solves $\ell_1$ penalizes least squares problems using a combination of spectral gradient descent and subspace optimization.  This idea was revisited in \cite{GS10}, in which the authors present a modification of FBS for $\ell_1$ penalized least squares that implicitly performs conjugate gradient optimization inside of low-rank subspaces. 

Spectral stepsizes for general FBS with arbitrary $f$ and $g$ were first studied in \cite{WNF09} for use with the solver SpaRSA.  The method presented here is a hybrid of the spectral scheme \cite{WNF09} with the ``adaptive'' stepsize rule presented in \cite{ZGD06}.

It was observed in the introduction that the stepsize restriction for FBS depends only on $f$ and not on $g.$  Spectral methods for FBS exploit this property.  The idea is to build a quadratic approximation for $f$ of the form \eqref{bb_approx} at each iteration, and then choose the optimal gradient descent stepsize  $\tau^k=1/a.$    Let 
  \aln{
  \Delta x^k &= x^k - x\km, \\
   \Delta F^k &= \nabla f (x^k) - \nabla f(x\km). 
  }
 If we assume a quadratic model for $f$ of the form \eqref{bb_approx}, then we have $$\Delta F^k= \nabla f(x^k) - \nabla f(x\km) = a(x^k - x\km).$$  The function \eqref{bb_approx} is fit to $f$ using a least squares method which chooses $a$ to minimize either $\| \Delta F^k - a \Delta x^k \|^2$ or $\| a^{-1}\Delta F^k -  \Delta x^k \|^2.$ Just like in the case $g=0,$ we then select a stepsize of length $\tau^k = 1/a$.  The resulting stepsize choice is given by  
  \aln{ \label{bbRules}
   \tau_s^k &=\frac{ \la   \Delta x^k, \Delta x^k \ra}{\la   \Delta x^k, \Delta F^k \ra},\text{ and }  \tau_m^k =\frac{\la   \Delta x^k, \Delta F^k \ra} { \la   \Delta F^k, \Delta F^k \ra}
   }  
respectively for each least squares problem.  The value $ \tau_s^k$ is known as the ``steepest descent'' stepsize, and  $ \tau_m^k$ is called the ``minimum residual'' stepsize \cite{ZGD06}.

A number of variations of spectral descent are reviewed by Fletcher \cite{Fletcher05}, several of which perform better in practice than the ``classic'' stepsize rules \eqref{bbRules}.  We recommend the ``adaptive'' BB method \cite{ZGD06},  which uses the rule
        \eqn{bbAdaptive}{ \tau^k=
    \css{
      \tau_m^k & \text{ if } \tau_m^k/\tau_s^k >\half \\
       \tau_s^k - \half  \tau_m^k & \text{ otherwise.}  
    }
    }
   Note that (particularly for non-convex problems), the stepsize $\tau_m^k$ or $\tau_s^k$ may be negative.  If this happens, the stepsize should be discarded and replaced with its value from the previous iteration.  When complex-valued problems are considered, it is important to use only the real part of the inner product in \eqref{bbAdaptive}.

\subsection{Preconditioning}


FBS can be ``preconditioned'' by multiplying the gradient directions of the forward step with a symmetric positive definite matrix $\Gamma.$  The backward step must be modified as well to ensure convergence.  The resulting approach, sometimes called the {\em proximal newton method} \cite{Bertsekas82, LSS12}, is described in Algorithm~\ref{alg:pfbs}.  
\begin{algorithm}[H]
\begin{algorithmic}
 \While{not converged}
 \begin{flalign}
    \qquad \hat x^{k+1} &= x^k-\tau^k \Gamma \nabla f(x^k) \label{pfs}&\\
  \label{pbs}   x^{k+1} &= \prox_g(\hat x^{k+1},\tau^k, \Gamma) = \argmin_x\,\, \tau^kg(x)+\frac{1}{2} \| x - \hat x^{k+1} \|^2_{\Gamma^{-1}}&
  \end{flalign}
 \EndWhile
\end{algorithmic}
\caption{Preconditioned Forward-Backward Splitting}
\label{alg:pfbs}
\end{algorithm} 
The preconditioned backward step \eqref{pbs} now requires the {\em generalized} proximal operator, which involves the following norm weighted by the matrix $\Gamma^{-1}$
  $$\|z\|^2_{\Gamma^{-1}} = \la \Gamma^{-1} z,z \ra.$$
For general $\Gamma$ the minimization \eqref{pbs} may not have a closed form solution, even when $g$ is ``simple.''  However, when the function $g$ acts separately on each element of $x,$ adding a {\em diagonal} preconditioner is a trivial modification.   For example, when $g=\mu |\cdot|$  (i.e., for the sparse least squares problems in section \ref{sec:l1ls}) and $\Gamma$ is diagonal, the $i$th entry of $ \prox_g(z ,\tau, \Gamma)$ is simply given by $\shrink(z_i,\mu\tau\Gamma_{ii}).$  Preconditioning with a diagonal $\Gamma$ is similarly easy in the case of the Support Vector Machine (Section \ref{sec:svm}).

   The preconditioner has another interpretation.  One can derive the preconditioned iteration (Algorithm \ref{alg:pfbs}) by making the change of variables $x\to Py$ for some invertible matrix $P$ and solving the problem 
     \eqn{precon}{\minimize_y f(Py)+g(Py).}
Each iterate $y^k$ can then be converted to an approximate solution to \eqref{general} by computing $x^k = Py^k.$ It can be shown that the iterates $\{x^k\}$ obtained through this procedure are equivalent to those obtained by Algorithm \ref{alg:pfbs} with preconditioner $\Gamma = P^TP.$ 

   The latter interpretation of the preconditioner is important because it suggests how to choose $\Gamma.$  When $f$ has the form $$f = \hat f (Ax)$$
for some matrix $A$ and function $\hat f,$  the preconditioned problem \eqref{precon} involves the objective $\hat f(APy).$  For example, when $f(x) = \|Ax-b\|^2,$ the problem \eqref{precon} contains the term $\|APy-b\|^2.$  If $A$ is poorly conditioned, then one should choose $P$ to be a {\em diagonal} matrix such that $AP$ is more well conditioned.  One effective method chooses the entries in $P$ so that all columns of $AP$ have unit norm.  The corresponding choice of $\Gamma = P^TP$ is then
  $$\Gamma_{ii} = \frac{1}{\|a_i\|^2}$$ 
  where $a_i$ denotes the $i$th column of $A.$
  
    Note that, because of the equivalence between preconditioned FBS and the original FBS on problem \eqref{precon},  all the tricks described in this section (including adaptivity and backtracking) can be applied to Algorithm \ref{alg:pfbs} as long as $\Gamma$ remains unchanged between iterations.  In the event that $\Gamma$ changes on every iteration, the converge theory becomes somewhat more complex (see \cite{CP11}). 

\subsection{Acceleration} \label{sec:fista}
Adaptive stepsize selection helps to speed the convergence of the potentially slow FBS method.  Another approach to dealing with slow convergence is to use predictor-corrector schemes that ``accelerate'' the convergence of FBS.
 
While several predictor-corrector variants of FBS have been proposed (see for example \cite{BF07_twist}), the algorithm FISTA has become quite popular because of its lack of tuning parameters and good worst-case performance.   FISTA relies on a one-size-fits-all sequence of acceleration parameters that works for any objective.   FISTA is listed in Algorithm \ref{alg:fista}.  Step 2 of FISTA simply performs an FBS step.  Step 4 performs {\em prediction}, in which the current iterate is advanced further in the direction it moved during the previous iteration.  The aggressiveness of this prediction step is controlled by the scalar parameter $\alpha^k.$  This acceleration parameter is updated in step 3, and increases on each iteration causing the algorithm to become progressively more aggressive.  The FISTA method alternates between ``predicting'' an aggressive estimate of the solution, and ``correcting'' this estimate using FBS to attain greater accuracy.

 One notable advantage of this approach is the worst-case convergence rate.  It has been shown that FISTA decreases the optimality gap (the difference between the objective at iteration $k$ and the optimal objective value)  with rate $O(\frac{1}{k^2})$ \cite{BT09}.  In contrast, the worst-case performance of conventional FBS is known to be $O(\frac{1}{k}).$
\begin{algorithm}[H]
\caption{FISTA}
\label{alg:fista}
\begin{algorithmic}[1]
\Require $ y^1=  x^0\in \reals^N, \, \alpha^1=1,\, \tau<1/L(\nabla G)$
\For {$k=1,2,3, \ldots $}
\State $x^k =  \prox_g  ( y^k-\tau \nabla f(y^k) , \tau)$
\State $\alpha^{k+1} = (1+\sqrt{1+4(\alpha^k)^2  })/2$
\State $y^{k+1} =x^k+\frac{\alpha^k-1}{\alpha^{k+1} }(x^k-x^{k-1})$\label{predict}
\EndFor 
\end{algorithmic}
\end{algorithm}
\noindent Rather than requiring the user to choose a stepsize, FISTA is generally coupled with a backtracking line search such as those described in the next section.

In theory, FISTA achieves superior worst-case performance by increasing the prediction parameter $\alpha^k$ on every iteration. In practice, however, FISTA does not always perform well when the prediction parameter $\alpha^k$ becomes large.  This is particularly true when a large/aggressive stepsize is used, in which case the objective values may oscillate rapidly before convergence is reached.  For this reason FISTA (and other Nesterov-type methods) generally perform better when a ``restart'' method is used  \cite{OC12}. Such methods allow $\alpha^k$ to increase while the performance of Algorithm \ref{alg:fista} is good, but reset the value to $\alpha^k=1$ if oscillations develop at iteration $k.$    
 
   One restart strategy simply sets $\alpha^k=1$ on iterations where the objective value increases.  However, this method requires the objective value to be calculated at each iteration, which is expensive and in general unnecessary.  The authors of \cite{OC12} suggest a restart method that does not require the objective.  The method restarts when the difference between iterates $x^k-x^{k-1}$ points in an ascent direction for the objective. It can be shown that this occurs whenever
      \eqn{restart}{\la y^k-x^k , x^k-x^{k-1}\ra \ge 0}
in which case the method should be restarted \cite{OC12}.  We will see in Section \ref{sec:stop} that the vector $y^k-x^k$ is parallel to the gradient of the objective function.  For this reason, condition \eqref{restart} restarts the method whenever the change between iterates forms an acute angle with the gradient (which is the direction of steepest ascent).

\subsection{Backtracking Line Search} \label{sec:linesearch}

Convergence of FBS can be guaranteed by enforcing the stability condition \eqref{restrict}.  In practice, though, the user generally has no knowledge of the global properties of $\nabla f,$ and so the actual value of this stepsize restriction is unknown.  In Section \ref{sec:adapt} we discuss adaptive methods that automatically choose stepsizes for us.  This does not free us from the need for stability conditions: spectral methods are, in general,  not guaranteed to converge, even for convex problems \cite{BMR00}.  

Even without knowing the stepsize restriction \eqref{restrict},  convergence can be guaranteed by incorporating a \emph{backtracking line search}.   Such methods proceed by checking a line search condition after each iteration of FBS.   The search condition usually enforces that the objective has decreased sufficiently. If this condition fails to hold, then {\em backtracking} is performed -- the stepsize is decreased and the FBS iteration repeated until the backtracking condition (i.e., sufficient decrease of the objective) is satisfied. 
 
Line search methods were originally proposed for smooth problems \cite{BMDIC95}, and later for projected gradient schemes \cite{KM96,BMR00}.  A backtracking  scheme for general FBS was proposed by Beck and Teboulle for use with FISTA \cite{BT09}.  
  Many authors consider these line searches to be overly conservative, particularly for poorly conditioned problems \cite{BMR00}, and prefer non-monotone line search conditions \cite{GLL86,ZH04}.  Rather than insisting upon an objective decrease on every iteration, non-monotone line search conditions allow the objective to increase within limitations.
  
    Non-monotone line search methods are advantageous for two reasons. First, for poorly conditioned problems (i.e., objective functions with long, narrow valleys) it may be the case that iterate $x\kp$  lies much closer to the minimizer than $x^k,$ despite $x\kp$ having the larger objective value. A non-monotone line search prevents such iterates from being rejected.  

Second, the objective function must be evaluated every time the line-search condition is tested.  For complex problems where evaluating the objective is costly and multiple backtracking steps are necessary for each iteration,  the line search procedure may dominate the runtime of the method.  Non-monotone line search conditions are less likely to be violated and thus backtracking terminates faster, which alleviates this computational burden.

The method proposed here is inspired by the monotone search proposed in \cite{BT09} for general FBS, and generalizes the non-monotone strategy of \cite{GLL86} for SPG.  
    
     Let $M>0$ be an integer line search parameter, and define
  $$ \hat f^k =  \max \{ f^{k-1},f^{k-2},\dots,f^{k- \min\{M,k\} }  \} .$$
After each step of FBS, the following line search condition is checked.~\
  \eqn{lscon}{
    f(x\kp)<  \hat f^k + \Re  \la  x\kp -x^k, \nabla f(x^k) \ra\\+ \frac{1}{2\tau^k}  \| x\kp-x^k\|^2.
    } 
    If the backtracking condition~\eqref{lscon} fails, the stepsize is decreased until~\eqref{lscon} is satisfied. This process is formalized in Algorithm~\ref{alg:search}.   Note that Algorithm \ref{alg:search} always terminates because condition \eqref{lscon} is guaranteed to hold whenever $\tau^k$ is less than the reciprocal of the Lipschitz constant of $\nabla f.$
    
    \begin{algorithm}[t]
\caption{Non-Monotone Line Search }
\label{alg:search}
\begin{algorithmic}
\While { $x^k$ and $x\kp$ violate  condition \eqref{lscon} } 
\State $\tau^k \gets \tau^k/2$
\State $x\kp \gets   \prox_g  ( x^k-\tau \nabla f(x^k) , \tau)$
\EndWhile
\end{algorithmic}
\end{algorithm}

A convergence proof for FBS with the line search described in Algorithm \ref{alg:search} is given in the appendix.   

\subsection{Continuation}
Some types of regression problems are solved very quickly when solutions are highly sparse, but become numerically difficult when solutions lack sparsity. This is sometimes the case for problem \eqref{bpdn}.  When $\mu$ is large,  solutions to \eqref{bpdn} are highly sparse.  For small $\mu,$ solutions may lack sparsity, resulting in slow convergence if $A$ is poorly conditioned.

    Continuation methods for  \eqref{bpdn} exploit this observation by choosing a large initial value of $\mu$ and  decreasing this parameter over time.  This way, the solver can use the results of ``easy'' problems as a warm start for more difficult problems with less sparsity.   Furthermore, using a warm start keeps the support of each iterate small, whereas the support might ``blow up'' for many iterations if difficult problems are attacked with a bad initializer.   
    
    Continuation techniques have been proposed by a number of authors (see for example \cite{FNW07,BBC09}) and are a keystone component in methods such as fixed-point continuation \cite{HYY07} that aim to approximate solutions to under-determined problems of the form 
      \eqn{bpdn_eq}{
 \minimize  \mu \|x\|_1 \st    Ax-b = 0
}
by solving \eqref{bpdn} with very small $\mu.$
     
     When choosing values for $\mu,$ it helps to observe that the solution to \eqref{bpdn} is zero when $\mu\ge \|A^T b\|_\infty.$  For this reason it is suggested to choose an initial parameter of $\mu =  \eta \|A^T b\|_\infty$ for some $\eta<1.$  After we obtain a solution using the current value of $\mu,$ we replace $\mu \leftarrow \eta \mu$ and continue iterating until the desired value of $\mu$ is reached.  We suggest to choose $\eta = \frac{1}{5},$ although the practical performance of continuation is not highly sensitive to this parameter. 
     
     Continuation is mostly effective for problem \eqref{bpdn} when $A$ is poorly conditioned, or when extremely large regularization parameters are needed.  Note that continuation does not always enhance performance, and may sometimes even make performance dramatically worse.

\subsection{Stopping Conditions}\label{sec:stop}
While the accuracy of FBS becomes arbitrarily good as the number of iterations approaches infinity, we must of course stop after a finite number of iterations.  A good stopping criteria should be strict enough to guarantee an acceptable degree of accuracy without requiring an excessive number of iterations.  

Our stopping conditions will be based on the {\em residual}, which is simply the derivative of the objective function (or a sub-gradient in the case that $g$ is non-differentiable).  Because $f$ is assumed to be smooth, we can differentiate this term in the objective directly.  While we may not be able to differentiate $g$, we see from equation \eqref{opt_g} that a sub-gradient is given by $(\hat x^{k+1}-x\kp)/\tau^k \in \partial g(x\kp).$  We now have the following formula for the residual $r\kp$ at iterate $x\kp:$
  \eqn{resid}{r\kp = \nabla f(x\kp)+\frac{ \hat x^{k+1}-x\kp}{\tau^k}.}

A simple termination rule would stop the algorithm when $\|r\kp\|<tol$ for some small tolerance $tol>0$.  However, this rule is problematic because it is not {\em scale invariant}.   To understand what this means, consider the minimization of some objective function $h(\cdot).$  This function can be re-scaled by a factor of 1000 to obtain $\hat h = 1000h.$  The new rescaled objective has the same minimizer as the original, however the sub-gradient $\partial \hat h(x^k)$ is 1000 times larger than $\partial  h(x^k).$  A stopping parameter $tol$ may be reasonable for minimizing $h$ but overly strict for minimizing $\hat h,$ even though the solutions to the problems are identical.   Ideally, we would like scale invariant stopping rules that treat both of these problems equally.

One way to achieve scale invariance is by replacing the residual with the {\em relative residual}. To define the relative residual, we observe that \eqref{resid} is small when
  \eqn{relmatch}{ \nabla f(x\kp)\approx - \frac{ \hat x^{k+1}-x\kp}{\tau^k}.}
In plain words, the residual \eqref{resid} measures the difference between the gradient of $f$ and the negative sub-gradient of $g$. The relative residual $r\kp_r$ measures the {\em relative} difference between these two quantities, which is given by
\eqn{rr}{ r\kp_r =  \frac{ \| \nabla f(x\kp)+\frac{ \hat x^{k+1}-x\kp}{\tau^k}   \| }{ \max\{
\|\nabla f(x\kp)\|,
\|\frac{ \hat x^{k+1}-x\kp}{\tau^k}\|
 \}+\epsilon_r}
 =  \frac{ \| r\kp\|   }{ \max\{
\|\nabla f(x\kp)\|,
\|\frac{ \hat x^{k+1}-x\kp}{\tau^k}
 \|\}+\epsilon_r}   }
where $\epsilon^r$ is some small positive constant to avoid dividing by zero.

Another more general scale invariant stopping condition uses the normalized residual, which is given by 
\eqn{rn}{r\kp_n = \frac{\|r\kp\|}{\|r^1\|+\epsilon_n}}
where the small constant  $\epsilon_n$ prevents division by zero. Rather than being an absolute measure of accuracy, the normalized residual measures how much the approximate solution has improved relative to $x^1.$

Both scale invariant conditions have advantages and disadvantages.  The relative residual works well for a wide range of problems and is insensitive to the initial choice of $x^0.$  However, the relative residual looses scale invariance when $\nabla f(x\opt)=0$ (in which case the denominator of \eqref{rr} nearly vanishes).  This happens, for example, when $g$ is the characteristic function of a convex set and the constraint is inactive at the optimal point (i.e., the optimal point lies in the interior of the constraint set).  In contrast, the normalized residual can be effective even if $\nabla f(x\opt)=0.$  However, this measure of convergence is potentially sensitive to the choice of the initial iterate, and so it requires the algorithm to be initialized in some consistent way.  The strictness of this condition can also depend on the problem being solved.

For general applications, we suggest a combined stopping condition that terminates the algorithm when either $r\kp_r$ or  $r\kp_n$ gets small.  In this case, $r\kp_r$ terminates the iteration if a high degree of accuracy is attained, and $r\kp_n$ terminates the residual appropriately in cases where $0\in \partial g(x\opt).$

\section{FASTA:  A Handy Forward-Backward Solver}
To create a common interface for testing different FBS variants, we have created the solver FASTA (Fast Adaptive Shrinkage/Thresholding), which implements forward-backward splitting for arbitrary problems. Many improvement to FBS are implemented in FASTA including adaptively, acceleration, backtracking, a variety of stopping conditions, and more.  FASTA enables different FBS variants to be compared objectively while controlling for the effects of stepsize rules, programming language, and other implementation details.  

Rather than addressing the problem \eqref{general} directly, FASTA solves general problems of the form
\eqb{general_fasta}
\minimize h(x) =  \tilde f(Ax)+g(x),
\eqe 
where $\tilde f$ and $A$ are chosen so that $\tilde f(Ax) = f(x).$  This problem form allows FASTA to compute $Ax^k$ once per iteration, and use the result twice to evaluate both the objective (which requires $\tilde f(Ax)$), and its gradient (given by $A^T\nabla \tilde f(Ax)$). The user can solve an arbitrary problem by supplying $A,$ the gradient of $f$ and the proximal mapping (i.e., backward gradient descent operator) for $g$.  However, custom wrappers are provided that solve all of the test problems described in this article, and codes are provided to reproduce the results in Section \ref{sec:num}.

\section{Numerical Experiments} \label{sec:num}

We compare several variants of FBS and compare their performance using the test problems described in Section \ref{sec:apps}.  The variants considered here are the original FBS with constant stepsize, and the accelerated variant FISTA described in Section \ref{sec:fista}.  We also consider the adaptive method described in Section \ref{sec:adapt}, which is implemented in the solver FASTA.  This method uses the spectral stepsize rules proposed in \cite{WNF09} and \cite{ZGD06}.

The stepsize parameter for all methods was initialized by first estimating the Lipschitz constant $L(\nabla f).$ Two random vectors $x^1,x^2$ were generated, and the estimate $ \tilde L= \|\nabla f(x^2) - \nabla f (x^1)\|/ \| x^2-x^1 \| \le  L(\nabla f)$ was formed. The initial stepsize was then $\tau^0 = 10/\tilde L.$  This stepsize is guaranteed to be at least an order of magnitude larger than the true stepsize restriction for FISTA ($1/L$).  The backtracking scheme in Section \ref{sec:linesearch} was then used to guarantee convergence.  We found that this implementation enables faster convergence than implementations that explicitly require a Lipschitz constant for $\nabla f$.

 At each iteration of the algorithms considered, the relative residual \eqref{rr} was computed and used as a measure of convergence.  Time trials for all methods were terminated on the first iteration that satisfied $r\kp_r<10^{-4}.$ If this stopping condition was not reached, iterations were terminated after 1000 iterations (except for the SVM problem, which was allowed up to 5000 iterations).  Time trial results are averaged over 100 random trials.
 
\begin{figure}[t]
\vspace{-0.2cm}
\centering
\subfigure[Matrix Completion]{\includegraphics[width=0.45\columnwidth, trim = .5cm 7.5cm 1cm 7.5cm, clip]{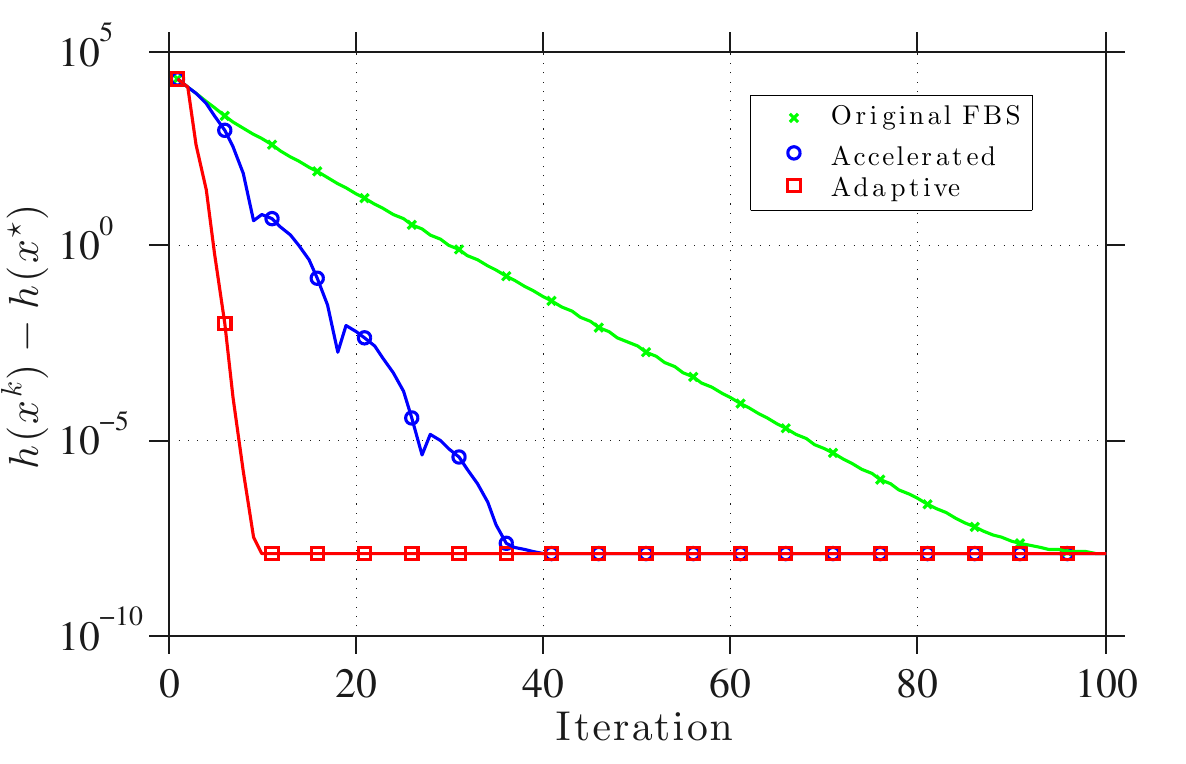}}
\hspace{0.9cm}
\subfigure[Support Vector Machine]{\includegraphics[width=0.45\columnwidth, trim = .5cm 7.5cm 1cm 7.5cm, clip]{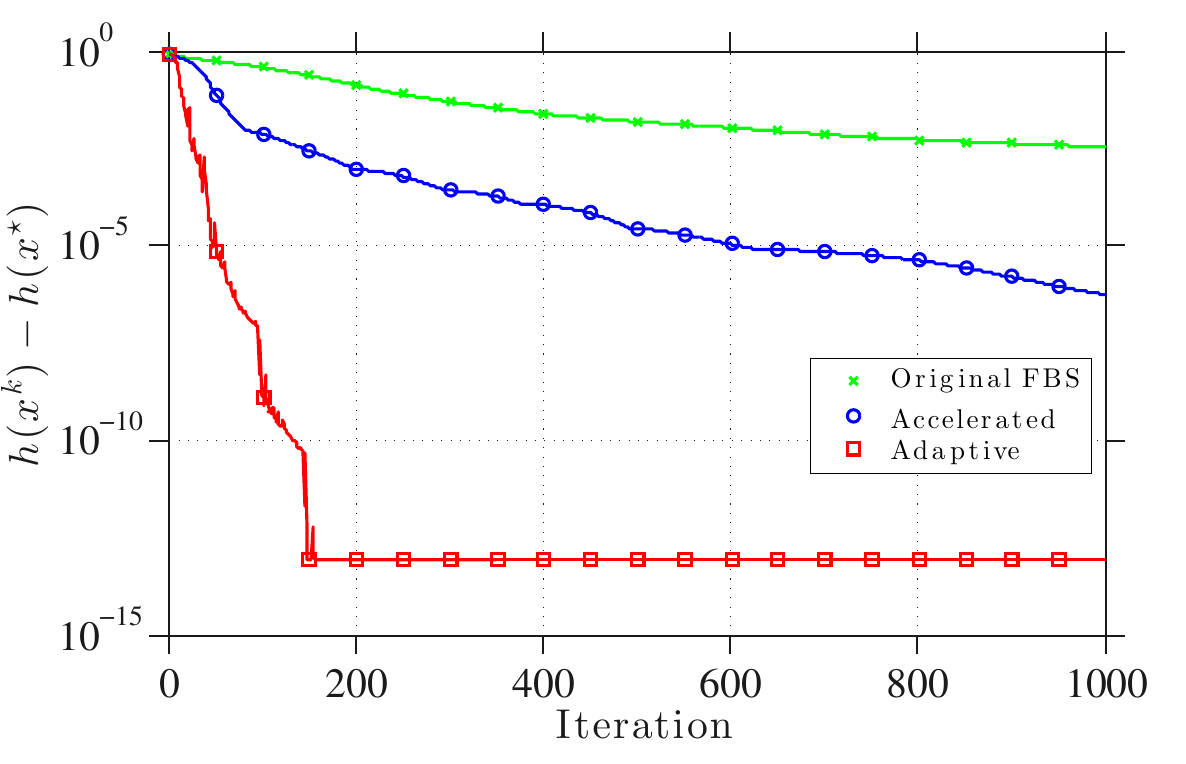}}\vspace{-0.2cm}
\subfigure[Non-negative Matrix Factorization]{\includegraphics[width=0.45\columnwidth, trim = .5cm 7.5cm 1cm 7.5cm, clip]{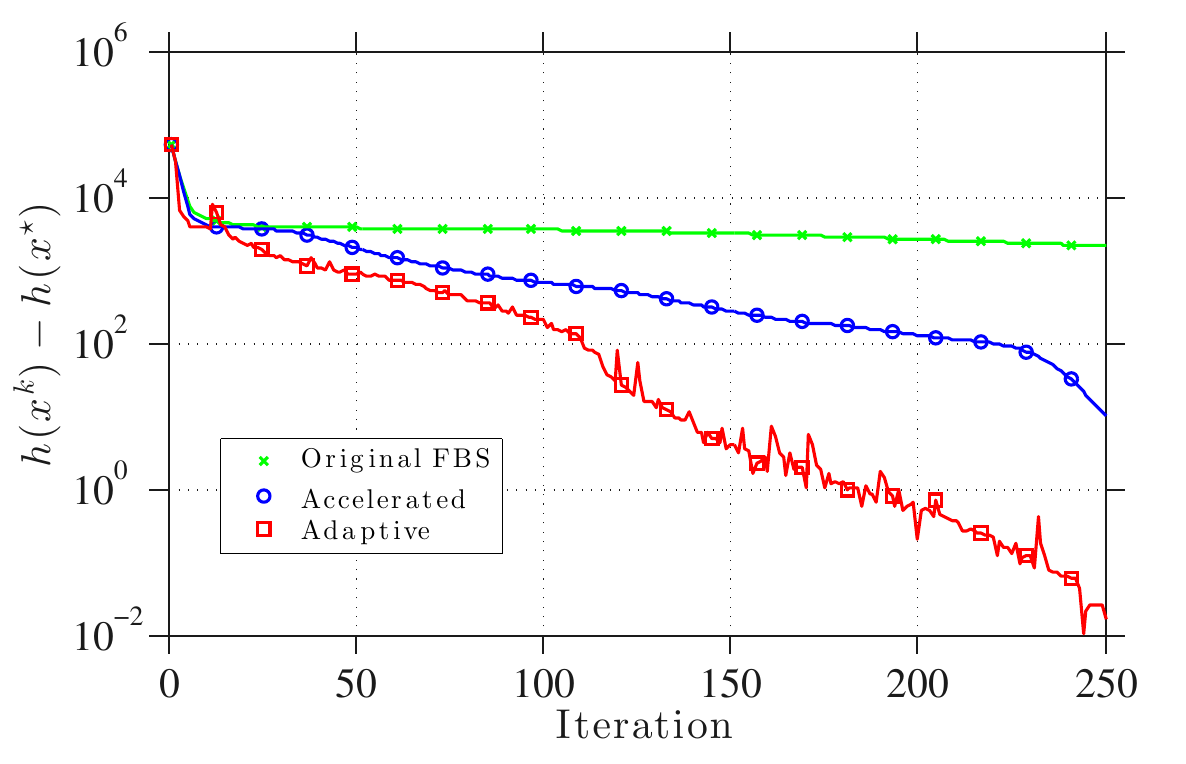}}
\hspace{0.9cm}
\subfigure[Total Variation]{\includegraphics[width=0.45\columnwidth, trim = .5cm 7.5cm 1cm 7.5cm, clip]{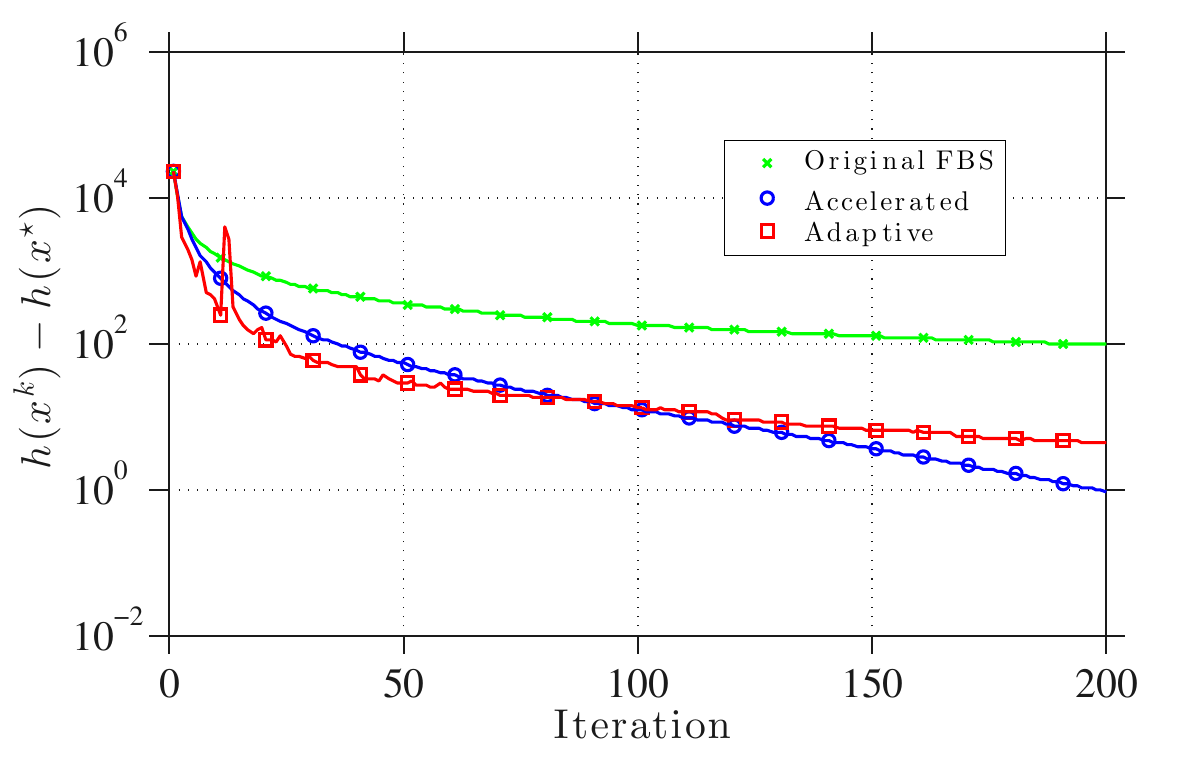}}
\vspace{-0.1cm}
\caption{Sample converge curves for FBS, FBS+acceleration (FISTA), and FBS+adaptivity (SpaRSA) for four diverse test problems (see Section \ref{sec:apps} and \ref{sec:num} for details). The vertical axis shows the optimality gap. }
\label{fig:curves}
\vspace{-0.3cm}
\end{figure} 

\newpage
\subsection{Test Problem Details}

\paragraph{\textbf{Lasso:}}\label{sec:lassoRes}

We generated problems of the form \eqref{lasso} using random Gaussian matrices $A\in\reals^{M\times N}$ with $N=1000.$ Experiments were done with both $M=500$ (moderately under-sampled) and $M = 100$ (highly under-sampled).  The true signal $x^0$ had $20$ entries of unit magnitude; the remaining entries were zero.   The indices of the support set were chosen at random.    The measurement vector was generated using the formula $b=Ax^0$ and then contaminated by additive Gaussian noise to achieve an signal-to-noise ratio (SNR) of 13\,dB. Recovery was  performed with $\lambda = 15.$

\paragraph{\textbf{${\boldsymbol\ell_1}$-Norm Penalized Least Squares:}}

We generated problems of the form \eqref{bpdn} using a similar procedure as for Lasso. This time, however, with the noise scaled to achieve $\text{SNR}=20$\,dB. Recovery was performed with $\mu = 0.1.$

\paragraph{\textbf{Logistic Regression:}}
 
We generated random matrices $A\in\reals^{M\times N}$ of dimension  $N=1000$ and $M=500$ with zero-mean Gaussian entries of variance of $4$.  Once again, the support of the true signal comprised 20 randomly chosen elements.  Given $z = Ax,$ $b$ was constructed as a set of random Bernoulli variables with probability of success given by the logistic function $P(b_i=1)=e^{z_i}/(1+e^{z_i}).$  The problem \eqref{logistic} was solved with $\mu = 20.$

\paragraph{\textbf{Multiple Measurement Vector:}}
We use the synthetic test problem suggested by \cite{CREK05}.  A random Gaussian measurement matrix $A\in \reals^{20\times 30}$ was selected for each problem instance.  A true solution matrix $X_0\in \reals^{30\times 10}$ was chosen with $7$ non-zero rows and random Gaussian entries in each non-zero row.  The data matrix was $B = AX_0+\eta,$ where $\eta$ was a random Gaussian noise matrix with $\sigma = 0.1.$   The solution was recovered with $\mu = 1.$

\paragraph{\textbf{Democratic Representations:}} 
We generated frames of dimension $500\times 1000$ by randomly selecting subset of rows from a unitary discrete Fourier transform matrix.  The signal $b$ was a complex-valued random Gaussian vector of length 500.  Equation \eqref{dem} was solved with $\mu = 300.$

\paragraph{\textbf{Matrix Completion:}}
We generated a $200\times1000$ matrix $X$ with i.i.d.\ random Gaussian entries of standard deviation $10.$
The SVD of $X$ was then computed, and all but the largest 5 singular values were set to zero to obtain a rank-$5$ matrix.  The sigmoidal logistic function was then applied element-wise to $X$ to obtain a matrix of probabilities; a random Bernoulli matrix $Y$ was drawn from the resulting distribution.  Problem \eqref{matcomp} was then solved using the logistic penalty function with $\mu=25.$

\paragraph{\textbf{Total Variation Denoising:}}
 The $256\times256$ Shepp-Logan phantom was constructed with pixel intensities that spanned the unit interval.  Test images were then contaminated with Gaussian noise ($\sigma=0.05$).  Images were denoised by solving \eqref{tv} with $\mu=0.1.$

\paragraph{\textbf{Support Vector Machine:}}
 Feature vectors were generated from two classes: one class containing random Gaussian variables with mean $-1$, and one with mean $+1.$  Each trial contained 1000 feature vectors of dimension 15.  A support vector machine was trained to separate the two classes with regularization parameter $C=10^{-2}.$

\paragraph{\textbf{Phase Retrieval:}}

We generated random vectors of length $N= 200$ with random Gaussian real and imaginary parts.  The set $\{a_i\}$ was created by randomly drawing complex Gaussian vectors.  The measurement vector $b$ of length $M = 600$ was creating by setting $b_i = |\la a_i,x\ra|^2,$ and then, contaminating the resulting measurements with real-valued Gaussian noise to have $\text{SNR}=13$\,dB.  Problem \eqref{smoothRankMin} was then solved with parameter $\mu=15,$ which was chosen to be large enough that the solution matrix had rank 1.

\paragraph{\textbf{Non-Negative Matrix Factorization:}}
 A rank-10 factorization problem was built from two matrices $X\in \reals^{800\times 10}$ and $Y\in \reals^{200\times 10}$ consisting of uniform random numbers from the interval $[0,1].$  The matrix $Q = XY^T$ was formed, and then contaminated with Gaussian noise of variance $10^{-2}.$  The matrix was then recovered by solving \eqref{nmf}.
 
 \paragraph{\textbf{Max-Norm Optimization:}}
 We solve the max-cut graph segmentation problem described in \cite{LRSST10}.  A ``two-moons'' data set was built with 1000 data points $\{x_i\},$ and the weighted adjacency matrix $W\in \reals^{1000\times 1000}$ was built with $W_{ij} =\delta-e^{\|x_i-x_j\|^2/\sigma^2},$ where $\delta = 0.01$ is a regularization parameter, and $\sigma=0.1$ is a scaling parameter.  Problem \eqref{maxcut} was solved using the weighted adjacency matrix.  The recovered matrix $X$ was then used to compute the labeling given by $\sign(Xr)$ where $r$ is a random Gaussian vector.    

\subsection{Results and Discussion}


 \begin{table}[t]
\centering
\caption{Complexity comparison of ``vanilla'' FBS, accelerated FBS (FISTA), and adaptive FBS (SpaRSA). We show the average number of iterations (and time per problem in seconds) to reach convergence. Adaptive FBS clearly outperforms other FBS variants for the considered test problems. \vspace{-5mm}} \label{kickasstable}
\vskip 0.15in
\begin{small}
\begin{tabular}{lccc}
\toprule
Problem & FBS & Accelerated &  Adaptive \\
\midrule
Lasso 100 &	  356 (0.138) & 	   55 (0.045) & 	   22 (0.021) \\
Lasso 500 &	   47 (0.065) & 	   20 (0.039) & 	    8 (0.018) \\
BPDN 100 &	  253 (0.088) & 	   48 (0.036) & 	   20 (0.022) \\
BPDN 500 &	   67 (0.077) & 	   23 (0.039) & 	   10 (0.019) \\
Logistic &	   40 (0.140) & 	   24 (0.091) & 	   14 (0.057) \\
MMV &	  657 (0.193) & 	   81 (0.046) & 	   58 (0.037) \\
Democratic &	   71 (0.074) & 	   31 (0.044) & 	   12 (0.028) \\
Mat Comp &	   69 (6.155) & 	   26 (2.569) & 	    8 (0.750) \\
TV Denoising &	 1000 (7.877) & 	  177 (1.800) & 	  102 (1.186) \\
SVM &	 3081 (0.643) & 	  244 (0.090) & 	   36 (0.024) \\
PhaseLift &	 1000 (36.720) & 	  186 (6.967) & 	   83 (3.614) \\
NMF &	 1000 (3.541) & 	  246 (1.319) & 	  173 (0.738) \\
Max-Norm &	  181 (16.578) & 	   43 (5.939) & 	   10 (0.909) \\
\bottomrule
\end{tabular}
\vspace{-2mm}
\end{small}
\end{table}

%

For all problems considered, both the accelerated and adaptive FBS dramatically out-performed ``vanilla'' FBS without these modifications.  However FBS does not reliably converge when adaptivity and acceleration are used simultaneously, and so the user must pick one.

To explore the efficiency of different approaches, we applied three variants of FBS to each test problem:  Plain FBS, FBS with acceleration (FISTA), and FBS with adaptive stepsizes (SpaRSA).   The accelerated FBS was accompanied by the restart rule \eqref{restart}, and all methods used backtracking line search. 
For each algorithm, the number of iterations (and time in seconds) needed to solve each test problem is reported in Table~\ref{kickasstable}.   

 For most problems considered,  the adaptive method out-performed the accelerated scheme by a factor of 3-to-5. For the SVM problem, the performance gap was somewhat larger.    We can take a closer look at the behavior of each method with the convergence curves in Figure \ref{fig:curves}.  Figure \ref{fig:curves}a shows the convergence for matrix completion, which looks typical for most problems including Lasso, penalized least-squares, logistic regression, and MMV.  For such problems, we see smooth, exponential decay of the error until machine precision is reached.  Note this empirical behavior is much better than the $O(1/k)$  worse-case global convergence bounds \cite{BT09}. 
 
 We also show some less-typical convergence curves, including SVM for which the performance gap between methods was extremely large.  The curves for non-negative matrix completion show more irregular behavior (including oscillations) because of non-convexity.   Finally, we show convergence curves for the total variation minimization in Figure \ref{fig:curves}d.  For this problem, adaptive and accelerated methods were competitive.  The adaptive method was superior in the low-precision regime, with the accelerated variant winning out in the high-precision regime.  This is largely because total variation involves the gradient operator, which has a large condition number.  Nesterov-type acceleration tends to be most effective for these types of poorly-conditioned problems, however the advantages over adaptivity are still slim in the examples shown here.

We note that the advantage of adaptivity is most pronounced for problems where $f$ is non-quadratic, i.e., for problems with a logistic data term.  In this case, the Hessian of~$f$ varies over the problem domain, and the optimal stepsize for FBS varies with it.  For such problems, an adaptive scheme is able to effectively match the stepsize to the local structure of the objective, resulting in fast convergence.


\section{Conclusion}
The forward-backward splitting method is a surprisingly simple way to solve a wide range of optimization problems.  Even seemingly complex problems involving non-differentiable objectives (total-variation, support vector machine, sparse regression), and complex constraint sets (semi-definite programing and max-norm regularization, etc...) can be reduced to a sequence of extremely simple steps.

Historically, FBS it is most commonly used for simple sparse regression problems despite its much wider applicability.  In many common domains, more complex splitting methods involving Lagrange multipliers (such as ADMM \cite{GL89, BPCPE10} and its variants \cite{EZC09,CP10}) are more commonly used.  However, these method are often more complex, memory intensive, and computationally burdensome than FBS.  Furthermore, FBS has a major advantage over other splitting methods -- algorithm parameters like stepsizes and stopping conditions are easily automated.  For other splitting methods, it is considerably more difficult to guarantee convergence for adaptive methods \cite{HYW00,GEB13}.
 For this reason it is fair to say that FBS is under-utilized for complex problems. 



\newpage

\appendix
\section{Convergence Proof for Non-Monotone Line Search}

We now consider the convergence of the backtracking line search discussed in section \ref{sec:linesearch}.
  \begin{theorem} \label{thm:converge}
  Suppose that FBS is applied to \eqref{general} with convex $g$ and differentiable $f$.  Suppose further that $h=f+g$ is   proper, lower semi-continuous, and has bounded level sets.     If $\{\tau^k\}$ is bounded below by a positive constant and  
 \eqn{backtrack}{
  f(x\kp)-  \hat f^k < \\ \la x\kp -x^k, \nabla f(x^k) \ra + \frac{1}{2\tau^k}  \| x\kp-x^k\|^2 
  }
  then
  $ \lim_{k\to\infty} h(x^k) = h\opt$,
  where $h\opt$ denotes the minimum value of $h$.
  \end{theorem}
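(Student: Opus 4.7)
The plan is to convert the backtracking condition \eqref{backtrack} into a descent inequality for the full composite objective $h$, use a Grippo--Lampariello--Lucidi (GLL)-style argument to deduce that successive iterates become arbitrarily close, and then pass from vanishing of the optimality residual to convergence in objective value.

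First, I would derive a descent-type inequality for $h$. The proximal step $x^{k+1}=\prox_g(x^k-\tau^k\nabla f(x^k),\tau^k)$ produces the first-order condition $(x^k-x^{k+1})/\tau^k-\nabla f(x^k)\in\partial g(x^{k+1})$. Applying the convexity inequality of $g$ at $x^{k+1}$ against the point $x^k$ with this particular subgradient yields
\begin{equation*}
g(x^{k+1})-g(x^k)\;\leq\;-\tfrac{1}{\tau^k}\|x^{k+1}-x^k\|^2-\langle\nabla f(x^k),x^{k+1}-x^k\rangle.
\end{equation*}
Adding this to \eqref{backtrack} causes the inner-product terms to cancel and produces the non-monotone descent bound
\begin{equation*}
h(x^{k+1})\;<\;\hat f^k+g(x^k)-\tfrac{1}{2\tau^k}\|x^{k+1}-x^k\|^2.
\end{equation*}
A short induction using the bounded-level-set hypothesis then confines every $x^k$ to a fixed compact sublevel set of $h$, on which $\nabla f$ is bounded and uniformly continuous.

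Second, I would apply a GLL-style argument to deduce $\|x^{k+1}-x^k\|\to 0$. Let $\ell(k)$ denote an index in the sliding window that attains $\hat f^k=f(x^{\ell(k)})$. The standard GLL bookkeeping shows that the subsequence $\{f(x^{\ell(k)})\}$ is eventually non-increasing and hence convergent, since it is bounded below on the compact sublevel set. Iterating the descent bound over the at most $M$ indices between $\ell(k)$ and $k$, and exploiting the telescoping cancellation of the $g$-decrements inherited from the prox step, forces the squared successive differences to be summable and in particular $\|x^{k+1}-x^k\|\to 0$. This is the main obstacle of the proof, because the line-search condition directly controls only $f$, so the offending $g(x^k)$ term on the right-hand side of the descent bound must be absorbed using monotonicity information extracted from the prox step rather than from the line search itself; the lower bound on $\tau^k$ is essential here to prevent the descent constant $1/(2\tau^k)$ from degenerating.

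Finally, once $\|x^{k+1}-x^k\|\to 0$, the residual defined in \eqref{resid} can be rewritten as $r^{k+1}=\nabla f(x^{k+1})-\nabla f(x^k)+\tfrac{1}{\tau^k}(x^k-x^{k+1})$, and it converges to zero because $\nabla f$ is uniformly continuous on the compact orbit and $\tau^k$ stays bounded below. Any cluster point $x^*$ of the bounded sequence $\{x^k\}$ therefore satisfies $0\in\nabla f(x^*)+\partial g(x^*)$; by convexity of $h$ this forces $h(x^*)=h^\star$. Combining this with lower semi-continuity of $h$ and the descent inequality, which prevents $\limsup_k h(x^k)$ from exceeding the value at any accumulation point, we conclude $h(x^k)\to h^\star$ along the full sequence.
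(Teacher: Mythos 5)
Your opening move is exactly the paper's: combine the prox-step optimality condition $(x^k-x^{k+1})/\tau^k-\nabla f(x^k)\in\partial g(x^{k+1})$ with the convexity of $g$ and add the result to \eqref{backtrack}, obtaining $h(x^{k+1}) < \hat f^k + g(x^k) - \frac{1}{2\tau^k}\|x^{k+1}-x^k\|^2$; and your closing move (rewrite the residual \eqref{resid} as $\nabla f(x^{k+1})-\nabla f(x^k)+(x^k-x^{k+1})/\tau^k$, let it vanish along a subsequence, and conclude that a cluster point of the bounded iterates is a global minimizer of the convex $h$) is also the paper's. The genuine divergence, and the genuine gap, is in the middle. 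You correctly locate the obstacle: the right-hand side of the descent bound is $\hat f^k+g(x^k)$, not $\hat h^k=\max\{h^{k-1},\dots,h^{k-\min\{M,k\}}\}$, because the line search \eqref{lscon} monitors only $f$. But your proposed repair --- ``telescoping cancellation of the $g$-decrements inherited from the prox step'' --- does not exist as stated. The prox step gives only $g(x^{k+1})-g(x^k)\le -\frac{1}{\tau^k}\|x^{k+1}-x^k\|^2-\langle\nabla f(x^k),x^{k+1}-x^k\rangle$, whose right-hand side has no sign, so $g$ need not decrease along the iterates (e.g., $g=\|\cdot\|_1$ with growing support). Without $g(x^k)\le g(x^{k'})$ for the index $k'$ attaining $\hat f^k$, you cannot replace $\hat f^k+g(x^k)$ by $\hat h^k$, and then neither your claim that the iterates remain in a fixed sublevel set of $h$ nor your GLL summability argument gets off the ground. (To be fair, the paper's own proof commits the same sin silently, simply writing $\hat f^k+g(x^k)=\hat h^k$ in \eqref{telescope}; the clean fix is to run the line search on $h$ rather than on $f$, i.e.\ to replace $\hat f^k$ by $\hat h^k$ in \eqref{lscon}, after which both arguments go through.)

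A second, smaller difference: you aim for $\|x^{k+1}-x^k\|\to 0$ along the \emph{whole} sequence via full GLL bookkeeping. That is more than is needed and is the harder half of that machinery. Once one has $h(x^{k+1})\le\hat h^k-\frac{1}{2\tau^k}\|x^{k+1}-x^k\|^2$, the sequence $\{\hat h^k\}$ is nonincreasing and bounded below (properness plus bounded level sets), hence convergent, and one only needs a single subsequence along which $\|x^{k(i)+1}-x^{k(i)}\|\to 0$ --- which follows immediately, since otherwise $\hat h^k\to-\infty$. That is all the paper extracts, and it is all your final step requires. On the other hand, your replacement of the paper's (unstated) Lipschitz hypothesis on $\nabla f$ by uniform continuity on a compact sublevel set is a small improvement, since the theorem as stated assumes only differentiability of $f$.
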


\begin{proof}

  From the optimality condition (4), we have $0 \in \tau^k  \partial  g(x\kp) + x\kp-\bar x\kp $ and so
  $$0 = \tau^k G\kp +x\kp- \bar x\kp = \tau^k G\kp + x\kp-(x^k-\tau F^k) $$
  for some $G\kp \in \partial g(x\kp)$ and $F^k = \nabla f(x^k).$  From this we arrive at
  \aln{ \label{combinedOpt}
  x^k-x\kp = \tau^k (G\kp+F^k)
  }  
  and also
  \eqn{toNorm}{
   \la x\kp-x^k, F^k+G\kp \ra = - \frac{1}{\tau^k}\|x\kp-x^k\|^2.
   }
  
  Now, because $g$ is convex
  \aln{\label{convG}
    g(x^k) \ge g(x\kp)+\la x^k-x\kp, G\kp  \ra.
  }
  Subtracting \eqref{convG} from \eqref{backtrack} and applying \eqref{toNorm} yields
    \aln{ \label{telescope} 
 h(x\kp) =& \, f(x\kp)+g(x\kp)  \notag
 \\ \le & \, \hat f^k+ g(x^k) + \la x\kp-x^k, F^k+G\kp \ra \notag \\ 
                  & +\frac{1}{2\tau^k}\|x\kp-x^k\|^2 \notag \\
     = & \,  \hat f^k+ g(x^k)-\frac{1}{2\tau^k}\|x\kp-x^k\|^2\notag \\
      = &  \, \hat h^k-\frac{1}{2\tau^k}\|x\kp-x^k\|^2 .
  }
  where $\hat h^k =  \max \{ h^{k-1},h^{k-2},\dots,h^{k- \min\{M,k\} }  \} .$  Note that $\{\hat h^k\}$ is a monotonically decreasing bounded sequence, and thus has a limit $\hat h^*.$  
  
  We aim to show that  $\hat h\opt$ is the minimal value of  $h$.   Observe that $\hat h^k = h^{k'}$ for some $k'$ with  $k-M\le k' \le k.$   It is clear from \eqref{telescope} that there must exist a sub-sequence  $\{ x^{k(i)} \}$ with $h(x^{k(i)} ) = \hat h^k$ such that 
  \aln{ \label{diffToZero}
  \lim_{i\to\infty} \frac{1}{2\tau^k}\|x^{k(i)+1} -x^{k(i)} \|^2=0
  }
   (otherwise, equation \eqref{telescope} would imply $\hat h^k\to -\infty$ for large $k$).  Note that  the level sets of $h$ are assumed to be bounded, and by equation \eqref{telescope} $\{h(x^k)\}$ is bounded as well.  By compactness,  we may assume without loss of generality that $\{ x^{k(i)} \}$ is a convergent sub-sequence of iterates with limit point $x\opt$.  
   
   Now, from  \eqref{combinedOpt}, we have 
   \aln{ \label{almostGradient}
    \frac{1}{2\tau^k} \|x^{k(i)+1} -x^{k(i)} \|^2 = \frac{\tau^k}{2} \| G^{k(i)+1} +F^{k(i)} \|^2 .
    }
   Equation \eqref{almostGradient}, together with  \eqref{diffToZero} and the fact that $\tau^k$ is bounded away from zero, implies that
     $$\lim_{i\to\infty} \| G^{k(i)+1} +F^{k(i)} \|^2 =0.$$   Because $\nabla f$ is Lipschitz continuous and $\|x^{k(i)+1} -x^{k(i)} \|\to0$, we also conclude that  $ x^{k(i)+1} \to x\opt$ and
        \aln{ \label{gradToZero}
        \lim_{i\to\infty} \| G^{k(i)+1} +F^{k(i+1)} \|^2 =0. 
        }
  Note that $G^{k(i)+1} +F^{k(i+1)} \in \partial h(x^{k(i)+1})$.   Because the sub-differential of a convex function is continuous\footnote{More formally, the convex  sub-differential is a multi-valued function which is upper semicontinous in a topological sense. See \cite{CSZ07,BC11}.  },  \eqref{gradToZero} implies that $0 \in \partial h(x\opt),$ and so  $x\opt$ is a minimizer of $h$.
  
    We have shown that $\lim_{k\to\infty} \hat h^k = h(x\opt) =  h\opt.$ Because $h\opt \le h(x^k)\le \hat h^k,$ we arrive at the conclusion 
     $$\lim_{k\to\infty} h(x^k) = h^*.$$
    
    \end{proof}

\bibliography{/Users/Tom/Documents/latexDocs/bib/tom_bibdesk}

\end{document}